\title{Non-commutative width and Gopakumar-Vafa invariants}
\date{}
\author{Yukinobu Toda}
\DeclareFontFamily{U}{rsfs}{%
\skewchar\font127}
\DeclareFontShape{U}{rsfs}{m}{n}{%
<-6>rsfs5<6-8.5>rsfs7<8.5->rsfs10}{}
\DeclareSymbolFont{rsfs}{U}{rsfs}{m}{n}
\DeclareRobustCommand*\rsfs{%
\@fontswitch\relax\mathrsfs}
\theoremstyle{plain}
\newtheorem{thm}{Theorem}[section]
\newtheorem{lem}[thm]{Lemma}
\newtheorem{defi}[thm]{Definition}
\newtheorem{rmk}[thm]{Remark}
\newtheorem{prop-defi}[thm]{Proposition-Definition}
\newtheorem{thm-defi}[thm]{Theorem-Definition}
\newtheorem{lem-defi}[thm]{Lemma-Definition}
\newtheorem{exam}[thm]{Example}
\newdimen\argwidth
\def\db[#1\db]{
 \setbox0=\hbox{$#1$}\argwidth=\wd0
 \setbox0=\hbox{$\left[\box0\right]$}
  \advance\argwidth by -\wd0
 \left[\kern.3\argwidth\box0 \kern.3\argwidth\right]}
\newcommand{\cC}{\mathcal{C}}
\newcommand{\eE}{\mathcal{E}}
\newcommand{\fF}{\mathcal{F}}
\newcommand{\hH}{\mathcal{H}}
\newcommand{\iI}{\mathcal{I}}
\newcommand{\lL}{\mathcal{L}}
\newcommand{\nN}{\mathcal{N}}
\newcommand{\oO}{\mathcal{O}}
\newcommand{\pP}{\mathcal{P}}
\newcommand{\qQ}{\mathcal{Q}}
\newcommand{\uU}{\mathcal{U}}
\newcommand{\xX}{\mathcal{X}}
\newcommand{\yY}{\mathcal{Y}}
\newcommand{\Supp}{\mathop{\rm Supp}\nolimits}
\newcommand{\Hom}{\mathop{\rm Hom}\nolimits}
\newcommand{\dotimes}{\stackrel{\textbf{L}}{\otimes}}
\newcommand{\dR}{\mathbf{R}}
\newcommand{\dL}{\mathbf{L}}
\newcommand{\Hilb}{\mathop{\rm Hilb}\nolimits}
\newcommand{\Ext}{\mathop{\rm Ext}\nolimits}
\newcommand{\Spec}{\mathop{\rm Spec}\nolimits}
\newcommand{\Coh}{\mathop{\rm Coh}\nolimits}
\newcommand{\cneq}{\mathrel{\raise.095ex\hbox{:}\mkern-4.2mu=}}
\newcommand{\eqcn}{\mathrel{=\mkern-4.5mu\raise.095ex\hbox{:}}}
\newcommand{\oPPer}{\mathop{\rm ^{0}Per}\nolimits}
\newcommand{\iPPer}{\mathop{\rm ^{-1}Per}\nolimits}
\newcommand{\pPPer}{\mathop{\rm ^{\mathit{p}}Per}\nolimits}
\newcommand{\modu}{\mathop{\rm mod}\nolimits}
\newcommand{\End}{\mathop{\rm End}\nolimits}
\begin{document}
\maketitle

\begin{abstract}
We show that the non-commutative widths
for flopping curves on smooth 3-folds introduced by 
Donovan-Wemyss are described by Katz's 
genus zero 
Gopakumar-Vafa invariants. 
\end{abstract}

\section{Introduction}
\subsection{Result}
Let $X$ be a smooth quasi-projective 
complex 3-fold and 
\begin{align*}
f \colon X \to Y
\end{align*}
a birational flopping contraction 
which contracts a single rational curve 
$\mathbb{P}^1 \cong C \subset X$ to a point 
$p\in Y$. 
In the paper~\cite{WM}, Donovan-Wemyss 
introduced a new invariant associated to $f$, 
the \textit{contraction algebra} $A_{\rm{con}}$, 
given by the universal non-commutative deformation 
algebra of the curve $C$ in $X$. 
The algebra $A_{\rm{con}}$ is finite dimensional, 
and it is commutative if and only if $C$ is not a $(1, -3)$-curve. 
Furthermore if $A_{\rm{con}}$ is commutative, 
the dimension of $A_{\rm{con}}$ coincides with Reid's width~\cite{Rei}
of $C$. Based on this observation, 
Donovan-Wemyss defined the following generalizations of 
Reid's width
\begin{align*}
\mathrm{wid}(C) \cneq \dim_{\mathbb{C}} A_{\rm{con}}, \quad 
\mathrm{cwid}(C) \cneq \dim_{\mathbb{C}} A_{\rm{con}}^{\rm{ab}}
\end{align*}
which they called \textit{non-commutative width} and \textit{commutative width}
respectively. 

On the other hand, Katz~\cite{Katz}
defined \textit{genus zero Gopakumar-Vafa (GV) invariants}
as virtual numbers of one 
dimensional stable sheaves on $X$. 
For $j\ge 1$, the genus zero GV invariant 
$n_{j}\in \mathbb{Z}_{\ge 0}$
 of curve class $j[C]$ on $X$ is shown 
in~\cite{Katz} to coincide
with the multiplicity of the Hilbert scheme
of $X$ at some subscheme $C^{(j)} \subset X$
with curve class $j[C]$. 
The purpose of this short note is 
to describe Donovan-Wemyss's widths in terms of 
Katz's genus zero GV invariants. 
The main result is as follows: 
\begin{thm}\label{thm:main}
We have the following formulas
\begin{align}
\mathrm{wid}(C) =\sum_{j=1}^{l} j^2 \cdot n_j, \quad
\mathrm{cwid}(C) =n_1. 
\end{align}
Here $l$ is the scheme theoretic length of $f^{-1}(p)$ at $C$. 
\end{thm}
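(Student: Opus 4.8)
The plan is to reduce to the complete local model, transport both sides through a non-commutative crepant resolution, and then match everything with Katz's moduli-theoretic description of the $n_j$; throughout $\dim$ abbreviates $\dim_{\mathbb{C}}$. \textbf{Step 1 (reduction and the NCCR).} All of $\mathrm{wid}(C)$, $\mathrm{cwid}(C)$, the $n_j$ and $l$ depend only on the formal fibre of $f$ over $p$, so we may assume $Y=\Spec R$ with $(R,\mathfrak m)$ a complete local isolated compound Du Val singularity, $f$ a crepant resolution, and $C=f^{-1}(p)_{\mathrm{red}}$. By Van den Bergh, $X$ carries a tilting bundle $\oO_X\oplus\nN$ with $\Lambda\cneq\End_R(R\oplus N)$ an NCCR and $\dR\Hom(\oO_X\oplus\nN,-)\colon D^b(\Coh X)\simto D^b(\mmod\Lambda)$ sending the van den Bergh perverse heart onto $\mmod\Lambda$; here $\rank\nN=l$. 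Write $e_0,e_1\in\Lambda$ for the idempotents at the summands $R$ and $N$. By Donovan--Wemyss, $A_{\rm con}\cong\Lambda/\Lambda e_0\Lambda\cong\underline{\End}_R(N)$, the subcategory $\mmod A_{\rm con}\subset\mmod\Lambda$ is that of modules with no composition factor $S_0$ (the simple at $e_0$), and the unique simple $A_{\rm con}$-module $S_1$ corresponds to $\oO_C(-1)[1]$. In particular $\mathrm{wid}(C)=\dim A_{\rm con}$ is the number of composition factors of $A_{\rm con}$ as a right module over itself, all isomorphic to $S_1$.

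\textbf{Step 2 (Katz's invariants as a module count, and the commutative width).} By Katz, for $1\le j\le l$ the moduli space $\mM_j$ of one-dimensional stable sheaves $F$ with $[F]=j[C]$, $\chi(F)=1$ is set-theoretically the single point $[\oO_{C^{(j)}}]$, and $n_j$ is its multiplicity in $\Hilb(X)$; since $\Hom(\oO_X,\oO_{C^{(j)}})=\mathbb{C}$ the cycle map identifies $\widehat{\oO}_{\Hilb(X),[C^{(j)}]}$ with the local ring of $\mM_j$, which is Artinian because $C^{(j)}$ is rigid, so $n_j=\dim\widehat{\oO}_{\Hilb(X),[C^{(j)}]}$. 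Transporting $\oO_{C^{(j)}}$ through the equivalence of Step~1 produces a complex of $\Lambda$-modules supported over $p$ whose relevant cohomology is an $A_{\rm con}$-module $M_j$, compatibly with deformations, so $\mM_j$ gets identified with a moduli space of $A_{\rm con}$-modules, still the single point $[M_j]$, with local ring of length $n_j$. The formula $\mathrm{cwid}(C)=n_1$ now follows quickly: $A_{\rm con}^{\rm ab}$ pro-represents the functor of \emph{commutative} deformations of $\oO_C$, and since $\End_X(\oO_C)=\mathbb{C}$ and $H^1(\oO_C)=0$ these coincide with deformations of $C$ as a closed subscheme, so $A_{\rm con}^{\rm ab}\cong\widehat{\oO}_{\Hilb(X),[C]}$; as $C$ is a flopping curve this ring is Artinian and $\mathrm{cwid}(C)=\dim A_{\rm con}^{\rm ab}=\mathrm{mult}_{[C]}\Hilb(X)=n_1$ by the case $j=1$, $C^{(1)}=C$.

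\textbf{Step 3 (the non-commutative width: the crux).} It remains to prove $\dim A_{\rm con}=\sum_{j=1}^l j^2 n_j$. The plan is to introduce on $A_{\rm con}$ the decreasing filtration by two-sided ideals $A_{\rm con}=I_0\supseteq I_1\supseteq\cdots\supseteq I_l=0$ induced through the equivalence by the filtration of the $\oO_X$-side along the thickenings $C=C^{(1)}\subset C^{(2)}\subset\cdots\subset C^{(l)}$ (equivalently, by powers of the ideal cutting out the reduced exceptional fibre inside $\Lambda$), and to establish a bimodule isomorphism
\[
\gr^{j}_{I}A_{\rm con}\ \cong\ \mathrm{Mat}_{j}(B_j),\qquad B_j\ \text{local Artinian with}\ \dim B_j=n_j .
\]
The matrix size $j$ should reflect that on the $j$-th thickening the tilting summand $\nN$ becomes an iterated self-extension of $j$ copies of the data governed by $S_1$ — i.e.\ that $[\oO_{C^{(j)}}]$ has $S_1$-multiplicity $j$ — while $\dim B_j=n_j$ is exactly the length computation of Step~2 applied to the moduli point $[M_j]$, using the compatibility of the two deformation theories. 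Granting this, summing over $j$ gives $\mathrm{wid}(C)=\dim A_{\rm con}=\sum_{j=1}^l j^2 n_j$. (Alternatively, one can try to read $\dim A_{\rm con}$ off a specialization of the non-commutative Donaldson--Thomas series of $\Lambda$ supported over $p$, which by the Gopakumar--Vafa/Donaldson--Thomas correspondence for local curves has an explicit product expansion in the $n_j$.)

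The main obstacle is Step~3: making precise the identification of the abstract ideal filtration of $A_{\rm con}$ with the geometric filtration by thickenings, and in particular proving $\gr^{j}_{I}A_{\rm con}\cong\mathrm{Mat}_{j}(B_j)$ with $\dim B_j=n_j$. I expect this to require either a $\mathbb{C}^{\ast}$-localization or Hall-algebra integration argument on the moduli stacks of $A_{\rm con}$-modules, or a direct analysis of the minimal projective $\Lambda$-resolution of $\oO_C$ and of its truncation modulo $e_0$, combined with Katz's identification of $n_j$ as a Hilbert-scheme multiplicity. Steps~1 and~2, by contrast, are essentially formal: they use only the tilting equivalence, Donovan--Wemyss's description of $A_{\rm con}$, the elementary fact that sheaf and subscheme deformations of $\oO_C$ agree, and Katz's theorem.
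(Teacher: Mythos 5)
Your treatment of $\mathrm{cwid}(C)=n_1$ is essentially the paper's own argument (identify $\Spec A_{\rm{con}}^{\rm{ab}}$, via representability of the commutative deformation functor and a twist by $\lL$, with the Artinian local ring defining $n_1$), and that part is fine. The problem is Step~3, which is where the whole content of the theorem lies, and which you do not prove: the claimed bimodule isomorphism $\gr^j_I A_{\rm{con}}\cong \mathrm{Mat}_j(B_j)$ with $\dim B_j=n_j$ is asserted, not established, and you say yourself that you expect it to require further (unspecified) localization, Hall-algebra, or resolution arguments. Worse, it is not clear the statement even makes sense as formulated: $A_{\rm{con}}$ is a local algebra (its unique simple module is the one-dimensional $S$, so $A_{\rm{con}}/\mathrm{rad}\cong\mathbb{C}$), hence no two-sided ideal filtration can have graded pieces that are matrix algebras as algebras; and as bimodules the only content of "$\dim\gr^j=j^2n_j$" is the theorem itself. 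No mechanism is offered for where the factor $j^2$ comes from beyond the numerical consistency with the answer, so Step~3 is a restatement of the goal rather than a proof.

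For comparison, the paper produces the factor $j^2$ by an entirely different, geometric mechanism. One deforms $\widehat{f}$ (after completion, following Bryan--Katz--Leung) over a base $T$ to flopping contractions $g_t$ of disjoint $(-1,-1)$-curves $C_{j,k}$, with exactly $n_j$ of them in class $j[C]$. The inverse flop-flop autoequivalence deforms in the family; its Fourier--Mukai kernel $\pP$ on $\xX\times_T\xX$ has $\hH^0(\pP)\cong\oO_{\Delta_\xX}$ and $\hH^1(\pP)$ flat over $T$. At $t=0$ the kernel is that of the non-commutative twist $T_{\eE}$, whose $\hH^1$ is filtered with $\dim A_{\rm{con}}$ subquotients $\oO_C(-1)\boxtimes\oO_C(-1)$; at $t\neq 0$ it is $\bigoplus_{j,k}\oO_{C_{j,k}}(-1)\boxtimes\oO_{C_{j,k}}(-1)$. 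Equating Euler characteristics against $\lL\boxtimes\lL$ gives $\dim A_{\rm{con}}\cdot d^2=\sum_j n_j (jd)^2$: the square appears because the kernel lives on the product $X\times X$, not because of any matrix-algebra structure on $A_{\rm{con}}$. If you want to salvage your algebraic Step~3 you would need an independent proof of the graded dimension count, which is precisely what is missing; as written the proposal does not prove the $\mathrm{wid}(C)$ identity.
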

Here we remark that the identity of $\mathrm{cwid}(C)$ is almost 
obvious from the definitions, and the identity of $\mathrm{wid}(C)$
is more interesting. 
The result of Theorem~\ref{thm:main} indicates that one 
can study non-commutative widths
without using non-commutative algebras\footnote{Wemyss pointed out 
to the author that the non-commutative widths are
commutative things, as they are 
computed using some $\Ext$-groups on 
commutative algebras. See~\cite[Remark~5.2]{WM}.  
}. 
Conversely, one may compute genus zero GV invariants
by computing contraction algebras. 
The proof of Theorem~\ref{thm:main}
is an easy application of the main result of~\cite{WM}, combined with 
some deformation argument.
By~\cite{WM}, the algebra $A_{\rm{con}}$ defines the non-commutative
twist functor, describing 
Bridgeland-Chen's flop-flop autoequivalence
of $D^b \Coh(X)$.
On the other hand, after taking the completion at $p$, the 
morphism $f$ deforms to flopping contractions of disjoint $(-1, -1)$-curves, 
such that the number 
of $(-1, -1)$-curves with curve
class $j[C]$ coincides with $n_j$.
Now the flop-flop autoequivalence deforms
along the deformation of $f$, hence
the non-commutative twist functor
also deforms:
the resulting deformation 
is 
a composition of 
Seidel-Thomas's spherical twists along $(-1, -1)$-curves. 
 We then relate the
Hilbert polynomial of 
a cohomology sheaf of 
the kernel object of 
the non-commutative twist functor 
with that of the above
composition of the spherical twists, 
and obtain the desired identity of $\mathrm{wid}(C)$. 
\subsection{Examples and a Remark}
Here we describe some examples of Theorem~\ref{thm:main}. 
\begin{exam}
In Theorem~\ref{thm:main}, we have 
$l=1$ if and only if 
$C$ is either a $(-1, -1)$ or a $(0, -2)$-curve. 
In this case, 
we have $\mathrm{wid}(C)=\mathrm{cwid}(C)$, 
and it coincides with Reid's width (cf.~\cite[Example~3.12]{WM}). 
On the other hand, the genus zero GV invariant $n_1$ also 
coincides with Reid's width as indicated in~\cite[Section~1]{BKL}. 
\end{exam}

\begin{exam}
Suppose that 
$Y=\Spec R_k$, where $R_k$ is defined by 
\begin{align*}
R_k=\mathbb{C}[u, v, x, y]/(u^2+v^2 y=x(x^2 + y^{2k+1})). 
\end{align*}
There is a flopping contraction $f \colon X \to Y$ 
with $l=2$. The contraction algebra $A_{\rm{con}}$ is computed
 in~\cite[Example~3.14]{WM}
\begin{align*}
&A_{\rm{con}} \cong \mathbb{C} \langle
x, y \rangle /(xy=-yx, x^2=y^{2k+1})\\
&A_{\rm{con}}^{\rm{ab}} \cong 
\mathbb{C}[x, y]/(xy=0, x^2=y^{2k+1}).  
\end{align*}
It follows that 
\begin{align*}
\mathrm{wid}(C)=3(2k+1), \quad 
\mathrm{cwid}(C)=2k+3. 
\end{align*}
The result of Theorem~\ref{thm:main}
indicates that $n_1=2k+3$ and $n_2=k$. 
\end{exam}
We also have the following remark: 
\begin{rmk}
We have $n_j \ge 1$ for $1\le j\le l$. 
So Theorem~\ref{thm:main} implies
that
\begin{align*}
\mathrm{wid}(C) \ge \sum_{j=1}^{l} j^2. 
\end{align*}
The above lower bound 
is better than the lower 
bound in~\cite[Remark~3.17]{WM}. 
\end{rmk}

\subsection{Acknowledgment}
I would like to thank Michael Wemyss and 
Will Donovan
for 
valuable comments on the manuscript. I would also like to thank Tom Bridgeland 
for checking a misprint of his paper~\cite{Br1}, 
and allowing me to correct it in Appendix~B. 
This work is supported by World Premier 
International Research Center Initiative
(WPI initiative), MEXT, Japan. This work is also supported by Grant-in Aid
for Scientific Research grant (No.~26287002)
from the Ministry of Education, Culture,
Sports, Science and Technology, Japan.

\section{Preliminary}
\subsection{3-fold flopping contractions}\label{subsec:flop}
Let $X$ be a smooth quasi-projective complex 3-fold. 
By definition, a \textit{flopping contraction} is a 
birational morphism 
\begin{align}\label{fXY}
f \colon X \to Y
\end{align}
which is isomorphic in codimension one, 
$Y$ has only Gorenstein singularities and 
the relative Picard number of $f$ equals to one. 
In what follows, we always assume that 
the exceptional locus $C$ of $f$ is
isomorphic to $\mathbb{P}^1$, and set
\begin{align*}
p \cneq f(C) \in Y. 
\end{align*}
We say that $C \subset X$ is $(a, b)$ curve if 
$N_{C/X}$ is isomorphic to $\oO_C(a) \oplus \oO_C(b)$. 
It is well-known that 
$(a, b)$
 is either one of the following: 
\begin{align*}
(a, b)=(-1, -1), \ (0, -2), \ (1, -3). 
\end{align*}
We denote by $l$ the length of $\oO_{f^{-1}(p)}$ at the 
generic point of $C$, where 
$f^{-1}(p)$ is the scheme theoretic fiber of 
$f$ at $p$. 
Then we have 
\begin{align*}
l \in \{1, 2, 3, 4, 5, 6\}
\end{align*}
and 
$l=1$ if and only if $C$ is not a $(1, -3)$-curve
(cf.~\cite[Section~1]{KaMo}). 
Moreover if $l=1$, then
we have 
\begin{align}\label{nwidth}
\widehat{\oO}_{Y, p} \cong \mathbb{C}\db[x, y, z, w \db]/(x^2+y^2+z^2+w^{2k}) 
\end{align}
for some $k\in \mathbb{Z}_{\ge 1}$. 
The number $k$ is called \textit{width} of $C$ 
in~\cite{Rei}.

\subsection{Contraction algebras}\label{subsec:cont}
In the setting of Subsection~\ref{subsec:flop}, 
we set $R =\widehat{\oO}_{Y, p}$, 
and take the following 
completion of (\ref{fXY})
\begin{align}\label{complete}
\widehat{f} \colon 
\widehat{X} \cneq X \times_Y \Spec R
\to \widehat{Y} \cneq \Spec R. 
\end{align}
Then there is a line bundle $\lL$ on $\widehat{X}$ such that 
$\deg(\lL|_{C})=1$. 
We define the vector bundle $\nN$ on $\widehat{X}$ 
to be the extension
\begin{align*}
0 \to \lL^{-1} \to \nN \to \oO_{\widehat{X}}^{\oplus r} \to 0
\end{align*}
given by the minimum generators of 
$H^1(\widehat{X}, \lL^{-1})$. 
We set 
$\uU \cneq \oO_{\widehat{X}} \oplus \nN$, 
$N \cneq \widehat{f}_{\ast}\nN$ and 
\begin{align*}
A \cneq \End_{\widehat{X}}(\uU) 
\cong \End_{R}(R
\oplus N). 
\end{align*}
By Van den Bergh~\cite[Section~3.2.8]{MVB}, 
we have a derived equivalence
\begin{align}\label{deq}
\dR \Hom_{\widehat{X}}(\uU, -) \colon 
D^b \Coh(\widehat{X}) \stackrel{\sim}{\to} D^b \modu A
\end{align}
whose inverse is given by $- \dotimes_{A} \uU$. 
Here $\modu A$ is the category of finitely generated right 
$A$-modules. 
\begin{defi}\emph{(\cite[Definition~2.11]{WM})}
The contraction algebra $A_{\rm{con}}$ is defined to be
$A/I_{\rm{con}}$, where $I_{\rm{con}}$ is the two 
sided ideal of $A$ consisting of morphisms 
$R\oplus N \to R
\oplus N$
factoring through a member of $\mathrm{add}(R)$. 
Here $\mathrm{add}(R)$ is the set of summands of finite sums of $R$. 
\end{defi}
By~\cite[Proposition~2.12]{WM},
the algebra $A_{\rm{con}}$ is finite dimensional.  
\begin{rmk}
The algebra $A_{\rm{con}}$ is commutative if and only if 
$C$ is not a $(1, -3)$-curve (cf.~\cite[Theorem~3.15]{WM}). In this case, 
$A_{\rm{con}}$ is isomorphic to 
$\mathbb{C}[t]/(t^k)$, where $k$ is the 
width of $C$ which appears in (\ref{nwidth}). 
See~\cite[Example~3.12]{WM}. 
\end{rmk}
The contraction algebra $A_{\rm{con}}$
coincides with
the universal 
algebra 
which represents the 
non-commutative deformation 
functor of $\oO_C(-1)$
\begin{align}\label{ADef}
\mathrm{Def}_{\oO_{C}(-1)} \colon 
\mathrm{Art}_1 \to \mathrm{Sets}.
\end{align}
Here $\mathrm{Art}_1$ is the category 
of finite dimensional $\mathbb{C}$-algebras $\Gamma$ with some 
additional conditions, 
and the functor (\ref{ADef})
assigns each $\Gamma$
to the set of isomorphism classes
of flat deformation of $\oO_C(-1)$
to $\Coh(\oO_X \otimes_{\mathbb{C}} \Gamma)$. 
We refer~\cite[Section~2]{WM} for details of the 
functor (\ref{ADef}).  
Since $\mathrm{A}_{\rm{con}}$ represents the functor 
(\ref{ADef}), 
there is the universal 
non-commutative deformation of $\oO_C(-1)$
\begin{align}\label{univ:E}
\eE \in \Coh(\oO_X \otimes_{\mathbb{C}}A_{\rm{con}}).
\end{align}
Let $A_{\rm{con}}^{\rm{ab}}$ 
be the abelization of $A_{\rm{con}}$. 
The algebra $A_{\rm{con}}^{\rm{ab}}$ is 
a commutative Artinian local $\mathbb{C}$-algebra, which 
represents the commutative deformation functor
\begin{align}\label{cDef}
\mathrm{cDef}_{\oO_{C}(-1)} \colon 
\mathrm{cArt}_1 \to \mathrm{Sets}.
\end{align}
Here $\mathrm{cArt}_1$ is 
the category of commutative Artinian local 
$\mathbb{C}$-algebras, 
and the functor 
(\ref{cDef}) is the restriction of
the functor (\ref{ADef}) to $\mathrm{cArt}_1$. 
We refer~\cite[Section~3]{WM}
for details of the above representabilities.

\subsection{Flop equivalences}
The contraction algebra $A_{\rm{con}}$ plays an important role 
in describing Bridgeland-Chen's flop-flop autoequivalence. 
Let us consider the flop diagram of (\ref{fXY})
\begin{align}\label{dia:flop}
\xymatrix{
X \ar[dr]_{f}  \ar@{.>}[rr]^{\phi} &  & X^{\dag}
 \ar[dl]^{f^{\dag}} \\
& Y.  &
}
\end{align}
By~\cite{Br1} and~\cite{Ch}, we have the derived equivalence
\begin{align}\label{Feq}
\Phi_{X \to X^{\dag}}^{\oO_{X \times_Y X^{\dag}}} \colon 
D^b \Coh(X) \stackrel{\sim}{\to} D^b \Coh(X^{\dag}). 
\end{align}
Here we use the notation in Appendix~A for the 
Fourier-Mukai functors. 
Composing (\ref{Feq}) twice, we obtain the autoequivalence
\begin{align}\label{FF:eq}
\Phi_{X^{\dag} \to X}^{\oO_{X \times_Y X^{\dag}}}
\circ \Phi_{X \to X^{\dag}}^{\oO_{X \times_Y X^{\dag}}}
\colon D^b \Coh(X) \stackrel{\sim}{\to} D^b \Coh(X). 
\end{align}
The result of~\cite[Proposition~7.18]{WM} shows that 
(\ref{FF:eq}) has an inverse
isomorphic to the non-commutative twist functor $T_{\eE}$
associated to the universal object (\ref{univ:E}).  
Namely $T_{\eE}$ is the autoequivalence of $D^b \Coh(X)$
which fits into the distinguished triangle
\begin{align}\label{nc:twist}
\dR \Hom(\eE, F) \dotimes_{A_{\rm{con}}} \eE \to F \to T_{\eE}(F)
\end{align}
for any $F \in D^b \Coh(X)$. 
If $C$ is a $(-1, -1)$-curve, the 
functor $T_{\eE}$
coincides with 
Seidel-Thomas twist~\cite{ST}
along $\oO_C(-1)$.
If $C$ is a $(0, -2)$-curve, 
then $T_{\eE}$ coincides with 
the author's generalized
twist~\cite{Tgen}\footnote{In~\cite{Tgen}, it was stated 
that $T_{\eE}$ is isomorphic to (\ref{FF:eq}), 
but it was wrong:
the correct statement is $T_{\eE}$ is an inverse of (\ref{FF:eq}). 
We explain details in Appendix~B.}. 
The kernel object of the
equivalence $T_{\eE}$ is given by 
 \begin{align*}
\mathrm{Cone} \left( \dR \Hom_{A}(A_{\rm{con}}, A) \dotimes_{A^{\rm{op}} 
\otimes A} (\uU^{\vee} \boxtimes \uU) \to \oO_{\Delta_{X}}   \right). 
\end{align*}
Here $\Delta_{X} \subset X \times X$ is the 
diagonal~(cf.~\cite[Lemma~6.16]{WM}). 
\begin{lem}\label{lem:F}
The object $\dR \Hom_{A}(A_{\rm{con}}, A) \dotimes_{A^{\rm{op}} 
\otimes A} (\uU^{\vee} \boxtimes \uU)$
is isomorphic to $\fF[-2]$ for 
$\fF \in \Coh(X \times X)$
satisfying the following: 
there is a filtration
\begin{align*}
0=\fF_0 \subset \fF_1 \subset \cdots \subset \fF_{\dim A_{\rm{con}}} =\fF
\end{align*}
such that each subquotient $\fF_j/\fF_{j-1}$ is isomorphic to 
$\oO_C(-1) \boxtimes \oO_C(-1)$. 
\end{lem}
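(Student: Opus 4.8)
The plan is to analyze the object
$\dR\Hom_{A}(A_{\rm{con}},A)\dotimes_{A^{\rm{op}}\otimes A}(\uU^{\vee}\boxtimes\uU)$
by first understanding its structure as a complex of bimodules over $A$, and only afterwards transporting the answer to $X\times X$ via the derived equivalence~(\ref{deq}). First I would recall that $A_{\rm{con}}=A/I_{\rm{con}}$ is a finite-dimensional $A$-bimodule, so $\dR\Hom_{A}(A_{\rm{con}},A)$ is a bounded complex of $A$-bimodules; the key input from~\cite{WM} (in the description of the kernel object already quoted in the excerpt) is that this complex is concentrated in the single cohomological degree $2$, i.e.\ it is isomorphic to $M[-2]$ for an $A$-bimodule $M$. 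Granting this, the object in question becomes $(M\dotimes_{A^{\rm{op}}\otimes A}(\uU^{\vee}\boxtimes\uU))[-2]$, and since $\uU^{\vee}\boxtimes\uU$ is a locally free sheaf on $X\times X$ (in particular flat in both directions) the derived tensor product collapses to an ordinary tensor product, producing a genuine sheaf $\fF\in\Coh(X\times X)$ placed in degree $2$. This establishes the claimed form $\fF[-2]$.

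Next I would produce the filtration. The bimodule $M=\Ext^2_{A}(A_{\rm{con}},A)$ carries a filtration coming from the filtration of $A_{\rm{con}}$ itself: since $A_{\rm{con}}$ is a finite-dimensional algebra with simple module $S$ supported at the vertex corresponding to $\oO_{C}(-1)$, there is a composition series $0=J_0\subset J_1\subset\cdots\subset J_{\dim A_{\rm{con}}}=A_{\rm{con}}$ of $A$-bimodules whose subquotients are all isomorphic to $S$ as left (and, after a twist, as right) modules; more precisely one uses a composition series of $A_{\rm{con}}$ as a module over $A_{\rm{con}}^{\rm{op}}\otimes A_{\rm{con}}$, each of whose factors is $S^{\vee}\boxtimes S$. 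Applying $\Ext^2_{A}(-,A)$ reverses the filtration and, because $\dR\Hom_{A}(S,A)\cong (\text{the projective corresponding to }S)^{\vee}[-2]$ up to the relevant shift, each subquotient of the induced filtration on $M$ is $\dR\Hom$-dual to $S^{\vee}\boxtimes S$, again a bimodule of the same ``rank one at the vertex'' type. I would then push this filtration through the exact functor $-\otimes_{A^{\rm{op}}\otimes A}(\uU^{\vee}\boxtimes\uU)$: each subquotient, being $S^{\vee}\boxtimes S$ up to isomorphism of bimodules, is sent under the derived equivalence precisely to $\oO_{C}(-1)\boxtimes\oO_{C}(-1)$, since the equivalence $\dR\Hom_{\widehat{X}}(\uU,-)$ sends $\oO_{C}(-1)$ to $S$ (this is the standard identification underlying the statement that $A_{\rm{con}}$ represents $\mathrm{Def}_{\oO_{C}(-1)}$). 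This yields the filtration $0=\fF_0\subset\fF_1\subset\cdots\subset\fF_{\dim A_{\rm{con}}}=\fF$ with the prescribed subquotients.

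The step I expect to be the main obstacle is the exact control of the filtration under $\Ext^2_{A}(-,A)$: $\dR\Hom_{A}(-,A)$ is only a contravariant derived functor, so applying it to the short exact sequences $0\to J_{j-1}\to J_j\to S\to 0$ produces long exact sequences rather than short ones, and one must check that all higher $\Ext^{i}_{A}(S,A)$ for $i\neq 2$ vanish after the relevant identification, so that the long exact sequences degenerate into short exact sequences $0\to\Ext^2_{A}(S,A)\to\Ext^2_{A}(J_j,A)\to\Ext^2_{A}(J_{j-1},A)\to 0$ and the filtration is genuinely a filtration of sheaves (not of complexes). For this one invokes that $A$ has finite global dimension (indeed $\uU$ is a tilting bundle on the smooth threefold $\widehat{X}$, so $A$ is derived-equivalent to $D^b\Coh(\widehat{X})$ and hence $\gr$-smooth of dimension $3$) together with the local computation at $p$ showing $\dR\Hom_{A}(S,A)$ is a shifted simple bimodule; alternatively, and perhaps more cleanly, one transports the entire argument to $X\times X$ first and works with the cohomology sheaves of the kernel complex there, using that $\oO_{C}(-1)$ is a spherical-type object so that the relevant $\Ext$-computations are forced. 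Once this vanishing is in hand, exactness of $-\otimes_{A^{\rm{op}}\otimes A}(\uU^{\vee}\boxtimes\uU)$ (flatness of $\uU^{\vee}\boxtimes\uU$) finishes the argument. A small additional check is that $\fF_1/\fF_0$ and $\fF_{\dim A_{\rm{con}}}/\fF_{\dim A_{\rm{con}}-1}$ are each genuinely $\oO_{C}(-1)\boxtimes\oO_{C}(-1)$ and not a nontrivial self-extension, which again follows from the simplicity of the subquotient bimodule $S^{\vee}\boxtimes S$.
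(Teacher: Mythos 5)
Your filtration strategy --- reduce along a composition series of $A_{\rm{con}}$ whose composition factors are all the simple module, and verify the claim factor by factor --- is exactly the paper's strategy (the paper filters by powers of the nilradical $\mathfrak{n}$). However, the two assertions you use to set up the reduction are both false, and the step you yourself flag as the main obstacle is resolved incorrectly. First, $\dR \Hom_{A}(A_{\rm{con}}, A)$ is \emph{not} concentrated in cohomological degree $2$ as a complex of bimodules, and this is not an input from~\cite{WM} (the quoted description of the kernel object makes no such degree assertion --- pinning down the degree is precisely the content of the lemma, and only on the sheaf side). Since $A_{\rm{con}}$ has finite length and $A$ is derived equivalent to the crepant resolution $\widehat{X}$, a smooth $3$-fold with $\omega_{\widehat{X}}$ trivial, relative Serre duality gives $\dR \Hom_{A}(A_{\rm{con}}, A) \cong A_{\rm{con}}^{\ast}[-3]$, concentrated in degree $3$; in particular $\Ext^2_A(A_{\rm{con}},A)=0$, so the bimodule $M$ you propose to filter is zero. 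Second, the derived tensor product $-\dotimes_{A^{\rm{op}}\otimes A}(\uU^{\vee}\boxtimes\uU)$ does not collapse to an underived one: local freeness of $\uU^{\vee}\boxtimes\uU$ over $\oO_{X\times X}$ is irrelevant; what you would need is flatness over $A^{\rm{op}}\otimes A$, and that fails because the inverse tilting equivalence is not $t$-exact (the heart $\modu A$ corresponds to a perverse heart, not to $\Coh(\widehat{X})$). Concretely, the one-dimensional bimodule $\dR\Hom_A(S,A)\otimes_{\mathbb{C}}S$ lives in degree $3$ on the module side, while its image on $\widehat{X}\times\widehat{X}$ is $\oO_C(-1)\boxtimes\oO_C(-1)[-2]$, in degree $2$: the functor shifts it. Your two errors happen to compensate to the correct final answer $\fF[-2]$, which is why the outcome looks plausible, but the argument as written does not establish it.

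The missing content is the computation the paper actually performs: working entirely at the derived level, one proves
\[
\dR \Hom_{A}(A_{\rm{con}}/\mathfrak{n}, A) \dotimes_{A^{\rm{op}}\otimes A} (\uU^{\vee}\boxtimes\uU) \cong \oO_C(-1)\boxtimes\oO_C(-1)[-2]
\]
by transporting through the tilting equivalence for $\uU^{\vee}\boxtimes\uU$ on $\widehat{X}\times\widehat{X}$ and using the self-duality $\dR\hH om_{\widehat{X}}(\oO_C(-1),\oO_{\widehat{X}})\cong\oO_C(-1)[-2]$; the K\"unneth factorization then identifies the right-hand side's image under $\dR\Hom(\uU^{\vee}\boxtimes\uU,-)$ with $\dR\Hom_A(S,A)\otimes_{\mathbb{C}}S\cong\dR\Hom_A(A_{\rm{con}}/\mathfrak{n},A)$. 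Once each graded piece is known to be a sheaf placed in the single degree $2$, the long exact sequences you worry about degenerate automatically (a triangle whose outer terms are sheaves concentrated in degree $2$ has middle term a sheaf in degree $2$), and the induced filtration of $\fF$ by subsheaves falls out. No $\Ext$-vanishing over $A$ in degree $\neq 2$ and no exactness of the tensor functor is needed --- and indeed neither holds in the form you invoke.
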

\begin{proof}
By the definition of $\mathrm{Art}_1$ in~\cite[Definition~2.1]{WM},
there 
is a $\mathbb{C}$-algebra homomorphism
$A_{\rm{con}} \to \mathbb{C}$ 
such that its kernel $\mathfrak{n} \subset A_{\rm{con}}$ is nilpotent. 
The ideal $\mathfrak{n} \subset A_{\rm{con}}$ is 
two-sided, and $A_{\rm{con}}/\mathfrak{n}$ is 
a one dimensional $A^{\rm{op}} \otimes A$-module. 
We have the filtration of $A^{\rm{op}} \otimes A$-modules
\begin{align*}
0=\mathfrak{n}^m \subset \mathfrak{n}^{m-1} \subset \cdots \subset
\mathfrak{n} \subset A_{\rm{con}}
\end{align*}
for some $m>0$ 
such that each subquotient
$\mathfrak{n}^i/\mathfrak{n}^{i+1}$
is an $A_{\rm{con}}/\mathfrak{n}$-module. 
Since $A_{\rm{con}}/\mathfrak{n}=\mathbb{C}$, 
the object $\mathfrak{n}^i/\mathfrak{n}^{i+1}$
is a finite direct sum of 
$A_{\rm{con}}/\mathfrak{n}$. 
Therefore it is enough to show that 
\begin{align}\label{enough}
&\dR \Hom_{A}(A_{\rm{con}}/\mathfrak{n}, A) \dotimes_{A^{\rm{op}} 
\otimes A} (\uU^{\vee} \boxtimes \uU) \\
\notag
&\cong \oO_C(-1) \boxtimes \oO_C(-1)[-2]. 
\end{align}
Let $S \in \modu A$ be the object given by 
\begin{align*}
S \cneq \dR \Hom_{\widehat{X}}(\uU, \oO_C(-1)). 
\end{align*}
Note that we have $\dim_{\mathbb{C}}S=1$.
The object $S$ is the unique simple $A_{\rm{con}}$-module
(cf.~\cite[Section~2.3]{WM}), 
hence $A_{\rm{con}}/\mathfrak{n}$ viewed
as a right $A_{\rm{con}}$-module is isomorphic to $S$. 
On the other hand, the vector bundle 
$\uU^{\vee} \boxtimes \uU$ on 
$\widehat{X} \times \widehat{X}$
is a tilting vector bundle.
Hence we have a derived equivalence
\begin{align*}
\dR \Hom_{\widehat{X} \times \widehat{X}}(\uU^{\vee} \boxtimes \uU, -) \colon 
D^b \Coh(\widehat{X} \times \widehat{X})
\stackrel{\sim}{\to} D^b \modu(A^{\rm{op}} \otimes A)
\end{align*}
with inverse given by 
$- \dotimes_{A^{\rm{op}} \otimes A} (\uU^{\vee} \boxtimes \uU)$. 
Let $\mathbb{D}$ be the dualizing functor
$\dR \hH om_{\widehat{X}}(-, \oO_{\widehat{X}})$ on $D^b \Coh(\widehat{X})$. 
We have
$\mathbb{D}(\oO_C(-1)) \cong \oO_C(-1)[-2]$, and 
\begin{align*}
&\dR \Hom_{\widehat{X} \times \widehat{X}}
(\uU^{\vee} \boxtimes \uU, \oO_C(-1) \boxtimes \oO_C(-1)[-2]) \\
&\cong \dR \Hom_{\widehat{X}\times \widehat{X}}(\mathbb{D}(\uU)
 \boxtimes \uU, \mathbb{D}(\oO_C(-1)) \boxtimes \oO_C(-1)) \\
&\cong \dR \Hom_{\widehat{X}}(\oO_C(-1), \uU)
 \otimes_{\mathbb{C}} \dR \Hom_{\widehat{X}}(\uU, \oO_C(-1)) \\
&\cong \dR \Hom_A(S, A) \otimes_{\mathbb{C}} S \\
&\cong \dR \Hom_{A}(A_{\rm{con}}/\mathfrak{n}, A).
\end{align*}
Therefore we obtain the desired isomorphism (\ref{enough}). 
\end{proof}

\subsection{Genus zero Gopakumar-Vafa invariants}
The genus zero GV invariants defined in~\cite{Katz}
count one dimensional stable sheaves $F$
on Calabi-Yau 3-folds satisfying $\chi(F)=1$. 
In the setting of Subsection~\ref{subsec:flop}, the 
variety $X$ may not be Calabi-Yau, 
but so in a neighborhood of $C$. 
Since $C$ is rigid in $X$, we can 
define the genus zero GV invariant with curve 
class $j[C]$ on $X$
as well. 
Indeed in~\cite{Katz}, the genus zero GV invariants
of $X$ are shown to coincide with 
the multiplicities of the Hilbert scheme of $X$
at some subschemes supported on $C$. 
Let $p\in H \subset Y$ be a general hypersurface,  
and $\overline{H} \subset X$ its proper transform. 
Then we have $C \subset \overline{H}$. 
Let $I \subset \oO_{\overline{H}}$ be
the ideal sheaf of $C$. 
For $j\ge 1$, we have the subscheme
$C^{(j)} \subset X$ 
given by
\begin{align*}
\oO_{C^{(j)}}=
(\oO_{\overline{H}}/I^j)/Q
\end{align*}
 where 
$Q$ is the maximum zero dimensional subsheaf of 
$\oO_{\overline{H}}/I^j$. 
Let $\Hilb(X)$ be the Hilbert scheme 
parameterizing closed subschemes in 
$X$. 
If $1\le j\le l$, it is shown 
in~\cite[Section~2.1]{BKL}, 
that 
$C^{(j)}$ is the isolated point in $\Hilb(X)$, and 
the following number is defined: 
\begin{defi}\label{def:nj}
For $1\le j\le l$, we define $n_j \in \mathbb{Z}_{\ge 1}$
to be
\begin{align*}
n_j \cneq \dim_{\mathbb{C}} \oO_{\Hilb(X), C^{(j)}}. 
\end{align*}
By convention, we define $n_j=0$ for $j>l$. 
\end{defi}
Since $\oO_{\Hilb(X), C^{(j)}}$ is 
a finitely generated Artinian $\mathbb{C}$-algebra, 
the number $n_j$
is well-defined. 
If $l=1$, the number $n_1$ equals to 
the width $k$ in (\ref{nwidth})
as indicated in~\cite[Section~1]{BKL}. 
In general, Katz~\cite{Katz} shows that 
$n_j$ coincides with the 
genus zero GV invariant of $X$ 
with curve class $j[C]$.  

The number $n_j$ also appears
in the context of deformations in the following way. 
By~\cite[Section~2.1]{BKL},
there exists a flat deformation 
of (\ref{complete})
\begin{align}\label{deform}
\xymatrix{ 
\xX \ar[r]^{g}\ar[dr] & \yY \ar[d] \\
&  T
}
\end{align} 
where $T$ is 
a Zariski open neighborhood of 
$0\in \mathbb{A}^1$
such that $g_0 \colon \xX_0 \to \yY_0$ 
is isomorphic to $\widehat{f}$
in (\ref{complete}), 
and $g_t \colon \xX_t \to \yY_t$
for $t \in T \setminus \{0\}$ 
is a flopping contraction whose 
exceptional locus is a disjoint union of
$(-1, -1)$-curves. 
Here $\xX_t, \yY_t$ are the fibers of 
$\xX \to T$, $\yY \to T$ at $t \in T$
respectively. 
Then the number $n_j$ coincides with 
the number of $g_t$-exceptional 
$(-1, -1)$-curves $C' \subset \xX_t$
for $t\neq 0$ whose curve class equals to $j[C]$, i.e. 
for any line bundle $\lL$ on $\xX$, we have
\begin{align}\label{deg}
\deg(\lL|_{C'})=j \deg(\lL|_{C})
\end{align}
 where 
we regard $C$ as a curve on the central fiber of 
$\xX \to T$. 
In what follows, we write 
the exceptional locus of $g_t$ for $t\neq 0$ 
as
\begin{align*}
C_{j, k} \subset \xX_t, \ 
1\le j\le l, \ 1\le k\le n_j
\end{align*}
where $C_{j, k}$ is a $(-1, -1)$-curve with curve class $j[C]$.

\section{Proof of Theorem~\ref{thm:main}}
\begin{proof}
The identity $\mathrm{cwid}(C)=n_1$
is almost obvious from the definitions of both sides. 
Indeed since $A_{\rm{con}}^{\rm{ab}}$
represents the commutative deformation functor
(\ref{cDef}), the scheme
$\Spec A_{\rm{con}}^{\rm{ab}}$ is the component of the moduli 
scheme of one dimensional 
stable sheaves on $X$ containing $\oO_{C}(-1)$. 
By tensoring the line bundle $\lL$ in Subsection~\ref{subsec:cont}, 
the scheme $\Spec A_{\rm{con}}^{\rm{ab}}$ is 
isomorphic to the component of the 
moduli scheme of stable sheaves
containing $\oO_C$, which defines the invariant $n_1$. 
By the proof of~\cite[Proposition~3.3]{Katz}, the degree of the virtual 
fundamental cycle of $\Spec A_{\rm{con}}^{\rm{ab}}$ 
coincides with the dimension of $A_{\rm{con}}^{\rm{ab}}$.
Therefore $\mathrm{cwid}(C)=n_1$ holds. 

We show the identity of $\mathrm{wid}(C)$. 
The morphism $g$ in (\ref{deform}) is a flopping contraction, 
and the argument of~\cite[Section~6]{Ch} shows that 
$g$ admits a flop  
 \begin{align*}
\xymatrix{
\xX \ar[dr]_{g}  \ar@{.>}[rr]^{\psi} &  & \xX^{\dag}
 \ar[dl]^{g^{\dag}} \\
& \yY  &
}
\end{align*}
such that we have the derived equivalence
\begin{align*}
\Phi_{\xX \to \xX^{\dag}}^{\oO_{\xX \times_{\yY} \xX^{\dag}}}
\colon D^b \Coh(\xX) \stackrel{\sim}{\to} D^b \Coh(\xX^{\dag}). 
\end{align*}
By composing the above equivalence 
twice, we obtain the autoequivalence
\begin{align}\label{Xtwice}
\Phi_{\xX^{\dag} \to \xX}^{\oO_{\xX \times_{\yY} \xX^{\dag}}} \circ
\Phi_{\xX \to \xX^{\dag}}^{\oO_{\xX \times_{\yY} \xX^{\dag}}}
\colon D^b \Coh(\xX) \stackrel{\sim}{\to} D^b \Coh(\xX). 
\end{align}
Let $\Psi$ be an inverse of the equivalence (\ref{Xtwice}), 
and
\begin{align*}
\pP \in D^b \Coh(\xX \times_{T} \xX)
\end{align*}
the kernel object of $\Psi$. 
By~\cite[Lemma~6.1]{Ch}, for each 
$t \in T$, we have the commutative diagram
\begin{align*}
\xymatrix{
D^b \Coh(\xX) \ar[r]^{\Psi} \ar[d]^{\dL i_t^{\ast}}
 & D^b \Coh(\xX) \ar[d]^{\dL i_t^{\ast}} \\
D^b \Coh(\xX_t) \ar[r]^{\Psi_t} & D^b \Coh(\xX_t). 
}
\end{align*}
Here $i_t \colon \xX_t \hookrightarrow \xX$ is the inclusion, and 
$\Psi_t$ is
the Fourier-Mukai functor with kernel 
$\pP_t \cneq \dL j_t^{\ast} \pP$, where 
$j_t$ is the inclusion
\begin{align*}
j_t \cneq (i_t \times i_t) \colon \xX_t \times \xX_t \hookrightarrow \xX \times_{T}\xX. 
\end{align*}
The functor $\Psi_t$ is an equivalence, 
and it has an inverse given by the composition
(cf.~\cite[Corollary~4.5]{Ch})
\begin{align}\label{Xtwice2}
\Phi_{\xX^{\dag}_t \to \xX_t}^{\oO_{\xX_t \times_{\yY_t} \xX^{\dag}_t}} \circ
\Phi_{\xX_t \to \xX^{\dag}_t}^{\oO_{\xX_t \times_{\yY_t} \xX^{\dag}_t}}
\colon D^b \Coh(\xX_t) \stackrel{\sim}{\to} D^b \Coh(\xX_t). 
\end{align}
Therefore by~\cite[Proposition~7.18]{WM},
the equivalence $\Psi_0$ is isomorphic to the 
non-commutative twist functor $T_{\eE}$ in (\ref{nc:twist}).
By the uniqueness of Fourier-Mukai kernels in 
Lemma~\ref{lem:unique} below, 
we have
\begin{align}\label{isom:0}
\pP_0 \cong \mathrm{Cone} \left( \fF_0[-2] \to \oO_{\Delta_{\xX_0}}   \right).
\end{align}
Here $\fF_0$ is a sheaf $\fF$
on $X \times X$ given in Lemma~\ref{lem:F}, 
restricted to $\widehat{X} \times \widehat{X}$.  

For $t\neq 0$, the 
birational map 
$\xX_t \dashrightarrow \xX_t^{\dag}$
is the composition of flops
at $(-1, -1)$-curves
$C_{j, k}$ for $1\le j\le l$, $1\le k\le n_j$. 
Hence the equivalence 
$\Psi_t$ for $t\neq 0$ is isomorphic to the 
compositions of all the 
spherical twists along 
$\oO_{C_{j, k}}(-1)$
for $1\le j\le l$, 
$1\le k\le n_j$. 
Therefore using Lemma~\ref{lem:unique} again, we have
\begin{align}\label{isom:t}
\pP_t \cong \mathrm{Cone} \left( 
\fF_t[-2] \to
 \oO_{\Delta_{\xX_t}}  \right)
\end{align}
where $\fF_t$ is a sheaf on $\xX_t \times \xX_t$ defined by
\begin{align}\label{def:Ft}
\fF_t \cneq 
\bigoplus_{j=1}^{l}
\bigoplus_{k=1}^{n_j}
\oO_{C_{j, k}}(-1) \boxtimes 
\oO_{C_{j, k}}(-1). 
\end{align}
\begin{lem}\label{lem:Hi}
We have $\hH^i(\pP)=0$ for $i\neq 0, 1$. 
\end{lem}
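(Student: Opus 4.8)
The plan is to establish the vanishing fibrewise and then use flatness of $\pP$ over $T$ together with cohomology-and-base-change. First I would observe that it suffices to show $\hH^i(\pP_t)=0$ for $i\neq 0,1$ for every $t\in T$. Indeed, since $\Psi$ is an equivalence of $D^b\Coh(\xX)$, the kernel $\pP$ is perfect over $T$ (more precisely, $\pP$ is a $T$-perfect complex on $\xX\times_T\xX$ that restricts to the Fourier--Mukai kernel $\pP_t$ on each fibre), so for each point $t$ there is a spectral sequence or a local representation of $\dL j_t^\ast$ relating $\hH^i(\pP)\otimes\kappa(t)$ and $\hH^i(\pP_t)$; bounding the amplitude of $\pP_t$ uniformly in $t$ bounds the amplitude of $\pP$. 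Concretely, if $\hH^i(\pP)\neq 0$ for some $i\notin\{0,1\}$, then by Nakayama the restriction $\dL i^\ast_t$ to the fibre over a point $t$ in the support would have nonzero cohomology outside $\{0,1\}$ (using that $T$ is a smooth curve, so $\dL j_t^\ast$ has homological amplitude at most $[-1,0]$ and can only spread cohomology by one step — and one checks this extra step does not reach below $-1$ or above $2$ given the precise shape of $\pP_t$ computed below).

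Next I would compute the fibrewise cohomology explicitly using the descriptions already obtained. For $t\neq 0$, equation~(\ref{isom:t}) gives $\pP_t\cong\mathrm{Cone}(\fF_t[-2]\to\oO_{\Delta_{\xX_t}})$ with $\fF_t$ the sheaf in~(\ref{def:Ft}); the long exact sequence of cohomology sheaves of this cone immediately yields $\hH^0(\pP_t)=\oO_{\Delta_{\xX_t}}$, $\hH^1(\pP_t)=\fF_t$, and $\hH^i(\pP_t)=0$ otherwise, since the connecting map $\oO_{\Delta_{\xX_t}}\to\fF_t[2-1+1]$ lands in the wrong degree and $\fF_t[-2]$ is a sheaf placed in degree $2$ while $\oO_{\Delta_{\xX_t}}$ sits in degree $0$. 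For $t=0$, equation~(\ref{isom:0}) gives $\pP_0\cong\mathrm{Cone}(\fF_0[-2]\to\oO_{\Delta_{\xX_0}})$ where, by Lemma~\ref{lem:F}, $\fF_0$ is an honest sheaf (it carries the filtration with subquotients $\oO_C(-1)\boxtimes\oO_C(-1)$), so the same long exact sequence argument gives $\hH^0(\pP_0)=\oO_{\Delta_{\xX_0}}$, $\hH^1(\pP_0)=\fF_0$, and $\hH^i(\pP_0)=0$ for $i\neq 0,1$. Thus in all fibres $\pP_t$ has cohomology concentrated in degrees $0$ and $1$.

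Finally I would transfer this to $\pP$ itself. Since $\xX\times_T\xX\to T$ is flat and $\pP$ is $T$-perfect, the function $t\mapsto\dim\mathrm{Ext}$-type invariants is controlled by base change; more directly, one argues as in~\cite[Lemma~6.1]{Ch}: the complex $\pP$ lives in $D^b\Coh(\xX\times_T\xX)$ and $\dL i_t^\ast\pP\cong\pP_t$ for all $t$, so if $\hH^i(\pP)$ were nonzero for some $i\neq 0,1$, choosing $i$ maximal among such, $\hH^i(\pP)$ would be a nonzero coherent sheaf, hence supported over some nonempty closed subset of $T$; restricting to a fibre over a point $t$ in that support, the top cohomology sheaf survives (possibly the very top one is unaffected by $\mathrm{Tor}$-corrections since $\dL i_t^\ast$ only introduces lower-degree terms when $T$ is one-dimensional), contradicting $\hH^i(\pP_t)=0$. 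Symmetrically the bottom is controlled because $\pP_t$ has no cohomology in negative degrees and $\mathrm{Tor}_1^{\oO_T}$ can shift at most by one, but $\hH^{-1}(\pP_t)=0$ forces $\hH^{-1}(\pP)$ to have zero fibre everywhere, hence to vanish. I expect the main obstacle to be the careful bookkeeping of the $\mathrm{Tor}$-spectral sequence for $\dL i_t^\ast$ — ensuring that the one-step spread allowed by $\dim T=1$ cannot manufacture cohomology of $\pP$ in degrees $-1$ or $2$ from cohomology of $\pP_t$ in degrees $0$ and $1$; this requires knowing not just the vanishing but the precise degrees $\{0,1\}$ of $\pP_t$ uniformly, which the explicit cone descriptions above supply.
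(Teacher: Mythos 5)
Your proposal is correct and follows essentially the same route as the paper: both deduce $\hH^i(\pP_t)=0$ for $i\neq 0,1$ from the cone descriptions (\ref{isom:0}) and (\ref{isom:t}), and then conclude $\hH^i(\pP)=0$ by restricting to fibres and applying Nakayama. The only difference is organizational: the paper packages the base-change via the triangle $\pP \to \pP \to j_{t\ast}\pP_t$, which immediately gives $j_t^{\ast}\hH^i(\pP)\hookrightarrow \hH^i(\pP_t)$, whereas you use the $\mathrm{Tor}$-spectral sequence for $\dL j_t^{\ast}$ over the one-dimensional base --- the same computation.
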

\begin{proof}
For any $t\in T$, 
we have the distinguished triangle
\begin{align*}
\pP \to \pP \to j_{t\ast} \pP_t.
\end{align*}
By (\ref{isom:0}) and (\ref{isom:t}), we have
$\hH^i(\pP_t)=0$ for any $t\in T$
and $i\neq 0, 1$. 
By taking the long exact sequence of cohomologies of the above 
triangle, 
we obtain $j_t^{\ast} \hH^i(\pP)=0$ for any 
$t \in T$ and $i\neq 0, 1$. 
Therefore we have $\hH^i(\pP)=0$ for $i\neq 0, 1$. 
\end{proof}

\begin{lem}
We have $\hH^0(\pP) \cong \oO_{\Delta_{\xX}}$ and 
$\hH^1(\pP)$ is flat over $T$. 
Furthermore we have 
$j_t^{\ast}\hH^1(\pP) \cong \fF_t$
for any $t\in T$. 
\end{lem}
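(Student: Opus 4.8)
The plan is to read off the cohomology sheaves of $\pP$ from its derived fibre restrictions $\pP_t=\dL j_t^{\ast}\pP$, whose cohomology is already known: the long exact sequences of the triangles $\fF_t[-2]\to\oO_{\Delta_{\xX_t}}\to\pP_t\to\fF_t[-1]$ coming from~(\ref{isom:0}) and~(\ref{isom:t}) give $\hH^0(\pP_t)\cong\oO_{\Delta_{\xX_t}}$, $\hH^1(\pP_t)\cong\fF_t$, and $\hH^i(\pP_t)=0$ otherwise, for every $t\in T$. First I would exploit two base-change triangles. For a closed point $t_0\in T$ the fibre $\xX_{t_0}\times\xX_{t_0}$ is the Cartier divisor $\{t=t_0\}\subset\xX\times_T\xX$, so there is a triangle $\pP\xrightarrow{t-t_0}\pP\to j_{t_0\ast}\pP_{t_0}$, and its cohomology long exact sequence together with Lemma~\ref{lem:Hi} shows that $t-t_0$ acts injectively on $\hH^0(\pP)$ for every $t_0$; hence $\hH^0(\pP)$ has no $\oO_T$-torsion and is flat over the smooth curve $T$. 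Applying $\dL j_{t_0}^{\ast}$ to the truncation triangle $\hH^0(\pP)\to\pP\to\hH^1(\pP)[-1]$ and using $T$-flatness of $\hH^0(\pP)$ (so $\dL j_{t_0}^{\ast}\hH^0(\pP)$ is a sheaf) together with the fact that $\dL j_{t_0}^{\ast}$ of a sheaf along a Cartier divisor is concentrated in degrees $0$ and $-1$ (equivalently, the hyper-$\mathrm{Tor}$ spectral sequence), its long exact sequence yields
\begin{align*}
j_t^{\ast}\hH^1(\pP)\cong\hH^1(\pP_t)\cong\fF_t\ \ (t\in T),\qquad 0\to j_{t_0}^{\ast}\hH^0(\pP)\to\oO_{\Delta_{\xX_{t_0}}}\to\tT_{t_0}\to 0,
\end{align*}
where $\tT_{t_0}\cneq\ker(t-t_0\colon\hH^1(\pP)\to\hH^1(\pP))$. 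This already proves $j_t^{\ast}\hH^1(\pP)\cong\fF_t$, and reduces the rest of the statement to showing $\hH^0(\pP)\cong\oO_{\Delta_{\xX}}$, equivalently $\tT_t=0$ for all $t\in T$.

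The main work, and what I expect to be the hard part, is identifying $\hH^0(\pP)$. Three facts are at hand: it is flat over $T$; each fibre $j_t^{\ast}\hH^0(\pP)$ is a subsheaf of $\oO_{\Delta_{\xX_t}}$; and away from the flopping locus the equivalence $\Psi$ is the identity, so $\pP$ restricts to $\oO_{\Delta_V}$ over $V\times_TV$, where $V=\xX\setminus E$ is the complement of the reduced $g$-exceptional locus $E$ (which is proper over $T$ and of codimension two in the smooth fourfold $\xX$). From the first two facts I would argue that $\hH^0(\pP)$ is scheme-theoretically supported on $\Delta_{\xX}\cong\xX$, say $\hH^0(\pP)=\iota_{\ast}G$ for a $T$-flat coherent sheaf $G$ on $\xX$ with $G|_{\xX_t}\hookrightarrow\oO_{\xX_t}$ for all $t$; the third fact gives $G|_V\cong\oO_V$, and a codimension-two extension ($S_2$/Hartogs) argument on the smooth $\xX$ upgrades this to an inclusion $G=\iI_Z\subset\oO_{\xX}$ for a closed subscheme $Z\subset E$ that is proper and flat over $T$, with $\oO_{Z_t}\cong\tT_t$ for every $t$.

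Granting that, the conclusion is quick: by generic flatness the $\oO_T$-torsion of $\hH^1(\pP)$ is supported over finitely many closed points of $T$, so $\oO_{Z_t}\cong\tT_t=0$ for $t$ in a dense open, and a sheaf flat over $T$ and proper over $T$ whose fibres vanish over a dense open must be zero; hence $Z=\emptyset$, $G=\oO_{\xX}$, $\hH^0(\pP)\cong\oO_{\Delta_{\xX}}$, and then $\tT_t=0$ for all $t$, so $t-t_0$ acts injectively on $\hH^1(\pP)$ for every $t_0$, i.e.\ $\hH^1(\pP)$ has no $\oO_T$-torsion and is flat over $T$; and $j_t^{\ast}\hH^1(\pP)\cong\fF_t$ by the first paragraph. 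I expect the delicate points to be entirely in the second paragraph: invoking the ``identity away from the flopping curves'' property carefully in the family (over $\yY$, in the completed setting), the codimension-two extension, and the base-change bookkeeping that keeps $\oO_Z$ flat over $T$. Finally, once this lemma holds, $T$-flatness of $\hH^1(\pP)$ forces the Hilbert polynomials of $\fF_0$ and $\fF_t$ to agree; computing $P(\fF_0)=\dim_{\mathbb{C}}A_{\rm{con}}\cdot P(\oO_C(-1)\boxtimes\oO_C(-1))$ from the filtration of Lemma~\ref{lem:F} and $P(\fF_t)=\sum_{j=1}^{l}\sum_{k=1}^{n_j}P(\oO_{C_{j,k}}(-1)\boxtimes\oO_{C_{j,k}}(-1))$ from~(\ref{def:Ft}), and comparing leading terms (a product of two curves of ``degree'' $j$ contributes $j^2$), then yields the formula $\mathrm{wid}(C)=\sum_{j=1}^{l}j^2 n_j$ of Theorem~\ref{thm:main}.
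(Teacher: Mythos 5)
Your overall strategy is the same as the paper's: restrict the truncation triangle $\hH^0(\pP)\to\pP\to\hH^1(\pP)[-1]$ to the fibres, read off $j_t^{\ast}\hH^1(\pP)\cong\fF_t$ together with the exact sequence $0\to j_t^{\ast}\hH^0(\pP)\to\oO_{\Delta_{\xX_t}}\to\tT_t\to 0$, show that $\hH^0(\pP)$ is $\Delta_{\ast}$ of a rank one torsion-free sheaf that is trivial outside the exceptional locus, pass to the reflexive hull to write it as an ideal sheaf $\iI_Z$, and use generic flatness to kill $Z$ over a dense open subset of $T$. The minor variations (deriving $T$-flatness of $\hH^0(\pP)$ from the divisor triangle $\pP\xrightarrow{t-t_0}\pP\to j_{t_0\ast}\pP_{t_0}$ rather than from the vanishing of $\hH^{-1}(\dL j_t^{\ast}\hH^0(\pP))$ in the restricted truncation triangle, and using the fact that $\Psi$ is the identity off the flopping locus instead of reading generic triviality of the ideal sheaf from the fibrewise sequences) are equivalent and unobjectionable.

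There is, however, one step that does not work as you state it: the conclusion ``$\oO_Z$ is flat and proper over $T$ with generically empty fibres, hence zero.'' Flatness of $\oO_Z=\oO_{\xX}/\iI_Z$ over $T$ is not something you can establish and then use --- a nonzero sheaf supported over finitely many points of the curve $T$ is automatically $\oO_T$-torsion, so ``$\oO_Z$ is $T$-flat'' is equivalent to the conclusion $Z=\emptyset$ you are trying to reach; moreover your identification $\oO_{Z_t}\cong\tT_t$ for \emph{every} $t$ already presupposes the vanishing of $\TOR_1^{\oO_{\xX}}(\oO_Z,\oO_{\xX_t})$, i.e.\ the same flatness. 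The correct replacement, which is what the paper does, is a direct contradiction: if $Z\neq\emptyset$, then $Z$ is supported over some $t_0\in T\setminus U$, so $\hH^{-1}(\dL i_{t_0}^{\ast}\oO_Z)=\ker(t-t_0\colon\oO_Z\to\oO_Z)\neq 0$, and the restriction of $0\to\iI_Z\to\oO_{\xX}\to\oO_Z\to 0$ to $\xX_{t_0}$ exhibits this as a nonzero torsion subsheaf of $i_{t_0}^{\ast}\iI_Z\cong j_{t_0}^{\ast}\hH^0(\pP)$, contradicting the injection $j_{t_0}^{\ast}\hH^0(\pP)\hookrightarrow\oO_{\Delta_{\xX_{t_0}}}$. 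With that substitution (and the observation, which you use implicitly, that $\tT_t$ is supported on $\Delta_t(\cC_t)$, of codimension at least two, so that the resulting injection $\oO_{\Delta_{\xX_t}}\hookrightarrow\oO_{\Delta_{\xX_t}}$ is forced to be an isomorphism), your proof coincides with the paper's.
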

\begin{proof}
By Lemma~\ref{lem:Hi}, we have the distinguished triangle
in $D^b \Coh(\xX \times_{T} \xX)$
\begin{align*}
\hH^0(\pP) \to \pP \to \hH^1(\pP)[-1]. 
\end{align*}
Applying $\dL j_t^{\ast}$, we obtain the 
distinguished triangle in $D^b \Coh(\xX_t \times \xX_t)$
\begin{align*}
\dL j_t^{\ast}\hH^0(\pP) \to \pP_t \to \dL j_t^{\ast}\hH^1(\pP)[-1]. 
\end{align*}
By taking the long exact sequence of cohomologies, we have 
\begin{align*}
\dL j_t^{\ast}\hH^0(\pP) \cong j_t^{\ast}\hH^0(\pP), \ 
\fF_t \cong j_t^{\ast}\hH^1(\pP)
\end{align*}
and the exact sequence
\begin{align}\label{ex:long}
0 \to j_t^{\ast}\hH^0(\pP) \to \oO_{\Delta_{\xX_t}} \to 
\hH^{-1}(\dL j_t^{\ast}\hH^1(\pP)) \to 0. 
\end{align}

Below we denote by 
$\Delta \colon \xX \to \xX \times_T \xX$, 
$\Delta_t \colon \xX_t \to \xX_t \times \xX_t$
the diagonal morphisms, and 
$\cC$ the exceptional locus of $g \colon \xX \to \yY$. 
The isomorphism $\fF_t \cong j_t^{\ast}\hH^1(\pP)$
implies that $\hH^1(\pP)$ is supported on $\cC \times \cC$, 
hence $\hH^{-1}(\dL j_t^{\ast}\hH^1(\pP))$ is supported on 
$\cC_t \times \cC_t$. 
The exact sequence (\ref{ex:long}) also implies that 
$\hH^{-1}(\dL j_t^{\ast}\hH^1(\pP))$
is supported on $\Delta_{\xX_t}$, hence on 
$\Delta_{\xX_t} \cap (\cC_t \times \cC_t) =\Delta_t(\cC_t)$. 
It follows that 
$\hH^0(\pP)$ is 
written as $\Delta_{\ast}\iI$ for 
a rank one torsion free sheaf $\iI$ on $\xX$, and
the exact sequence (\ref{ex:long})
is given by $\Delta_{t\ast}$ of the 
exact sequence of the following form
\begin{align}\label{form:}
0 \to i_t^{\ast}\iI \to \oO_{\xX_t} \to \oO_{\cC_t'} \to 0
\end{align}
for some subscheme $\cC_t' \subset \xX_t$ supported on $\cC_t$. 
Also by the generic flatness, 
there is a non-empty Zariski open subset $U \subset T$
such that  
$\hH^{-1}(\dL j_t^{\ast}\hH^1(\pP))=0$
for all $t\in U$. 
This implies that 
$\cC_t'=\emptyset$ for all $t\in U$,
hence $\iI$ is isomorphic to $\oO_{\xX}$ 
away from $\cC_t$ for $t \in T \setminus U$. 
By taking the double dual of $\iI$, we obtain the exact sequence
\begin{align*}
0 \to \iI \to \oO_{\xX} \to \oO_{\cC'} \to 0
\end{align*}
where $\cC'$ is supported on $\cC_t$ for $t\in T \setminus U$. 
If $\cC' \neq \emptyset$, then 
$j_t^{\ast}\hH^0(\pP) \cong \Delta_{t \ast} i_t^{\ast}\iI$ contains 
the non-zero sheaf $\Delta_{t\ast}\hH^{-1} \dL i_t^{\ast}(\oO_{\cC'})$
for some $t \in T \setminus U$ supported on $\cC_t$, 
which contradicts to (\ref{ex:long}). 
Therefore $\cC'=\emptyset$ and 
$\hH^0(\pP) \cong \oO_{\Delta_{\xX}}$ holds. 

Now in the sequence (\ref{form:}), we have 
$i_t^{\ast}\iI\cong \oO_{\xX_t}$ for any 
$t\in T$, hence $\cC_t'=\emptyset$
as $\cC_t'$ has codimension bigger than or equal to two in $\xX_t$. 
This implies that $\hH^{-1}(\dL j_t^{\ast}\hH^1(\pP))=0$
for any $t\in T$, hence $\hH^1(\pP)$ is flat over $T$. 
\end{proof}
By the above lemma, the sheaf $\fF_t$ for $t\neq 0$ is a flat deformation of 
$\fF_0$. Since they have compact supports, 
 $\fF_0$ and $\fF_t$ have the same Hilbert polynomials. 
It follows that, for a 
$g$-ample line bundle $\lL$ on $\xX$
with $d \cneq \deg(\lL|_{C})>0$, we have the equality
\begin{align}\label{eq:chi}
\chi(\fF_0 \otimes (\lL \boxtimes \lL))=
\chi(\fF_t \otimes (\lL \boxtimes \lL)). 
\end{align}
By Lemma~\ref{lem:F} and the Riemann-Roch theorem, we have 
\begin{align*}
\chi(\fF_0 \otimes (\lL \boxtimes \lL)) &=\dim_{\mathbb{C}}A_{\rm{con}} \cdot 
\chi(\oO_C(-1) \otimes \lL)^2 \\
&=\dim_{\mathbb{C}}A_{\rm{con}} \cdot d^2.  
\end{align*}
By the definition of $\fF_t$ for $t\neq 0$ in (\ref{def:Ft}),
we have
\begin{align*}
\chi(\fF_t \otimes (\lL \boxtimes \lL)) &=
\sum_{j=1}^{l} \sum_{k=1}^{n_j}
\chi(\oO_{C_{j, k}}(-1) \otimes \lL)^2 \\
&=\sum_{j=1}^{l} j^2 \cdot n_j \cdot d^2.  
\end{align*}
Here we have used the relation (\ref{deg})
for $C'=C_{j, k}$. 
Since $d>0$, the 
equality (\ref{eq:chi})
implies the desired equality for $\mathrm{wid}(C)$. 
\end{proof}

\appendix

\section{Uniqueness of Fourier-Mukai kernels}
Let $Y$ be a quasi-projective complex variety, or 
a spectrum of a completion of a finitely generated $\mathbb{C}$-algebra
at some maximum ideal. 
Suppose that 
$f_i \colon X_i \to Y$ are projective morphisms for $i=1, 2$, and 
$X_i$ are regular schemes. 
Given an object
\begin{align*}
\pP \in D^b \Coh(X_1 \times X_2)
\end{align*}
supported on $X_1 \times_Y X_2$, 
we have the 
Fourier-Mukai functor
\begin{align*}
\Phi_{X_1 \to X_2}^{\pP} \colon 
D^b \Coh(X_1) \to D^b \Coh(X_2)
\end{align*}
defined by 
\begin{align*}
\Phi_{X_1 \to X_2}^{\pP}
(-) \cneq 
\dR p_{2 \ast}(\dL p_1^{\ast}(-) \dotimes \pP)
\end{align*}
where $p_i \colon X_1 \times X_2 \to X_i$ is 
the projection. 
The above functor preserves
coherence
since $p_{2}|_{\Supp(\pP)}$ is projective. 
For another regular scheme 
$X_3$, a projective morphism
$f_3 \colon X_3 \to Y$ and 
an object $\qQ \in D^b \Coh(X_2 \times X_3)$
supported on $X_2 \times_Y X_3$, 
we have 
\begin{align*}
\Phi_{X_2\to X_3}^{\qQ} \circ \Phi_{X_1\to X_2}^{\pP}
\cong 
\Phi_{X_1 \to X_3}^{\qQ \circ \pP}
\end{align*}
where $\qQ \circ \pP$
is defined by
(cf.~\cite[Proposition~2.3]{Ch})
\begin{align*}
\qQ \circ \pP \cneq 
\dR p_{13\ast}(p_{12}^{\ast} \pP \dotimes p_{23}^{\ast} \qQ). 
\end{align*}
Here $p_{ij}
 \colon X_1 \times X_2 \times X_3 \to X_i \times X_j$ is the projection. 

If $Y=\Spec \mathbb{C}$ and 
$\Phi_{X_1 \to X_2}^{\pP}$ is an equivalence, 
then Orlov~\cite{Or1} showed that the kernel object $\pP$
is unique up to an isomorphism, i.e. 
$\Phi_{X_1 \to X_2}^{\pP} \cong \Phi_{X_1 \to X_2}^{\qQ}$
implies $\pP \cong \qQ$. 
It should be well-known that the same claim holds 
without $Y=\Spec \mathbb{C}$ assumption, but 
as the author cannot find a reference we include 
a proof here. 
\begin{lem}\label{lem:unique}
For $\pP, \qQ \in D^b \Coh(X_1 \times X_2)$
supported on $X_1 \times_Y X_2$, 
suppose that the following conditions hold: 
\begin{itemize}
\item We have an isomorphism of functors
$\Phi_{X_1 \to X_2}^{\pP} \cong \Phi_{X_1 \to X_2}^{\qQ}$. 
\item The functors $\Phi_{X_1 \to X_2}^{\pP}$, $\Phi_{X_1 \to X_2}^{\qQ}$
are equivalences. 
\end{itemize}
Then we have $\pP \cong \qQ$. 
\end{lem}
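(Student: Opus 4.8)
The plan is to reduce to Orlov's theorem over $\Spec\mathbb{C}$ by a standard adjunction/Yoneda argument, adapted to the relative setting. First I would note that the kernel object of a Fourier–Mukai functor is recovered intrinsically: for $\pP \in D^b\Coh(X_1\times X_2)$, one has $\pP \cong \Phi^{\pP}_{X_1\to X_2}(\oO_{X_1})$ only pointwise, which is not enough, so instead I would use the functorial description. Consider the functor $\Phi^{\pP}_{X_1\to X_2}$ and apply it to the "universal" object: for a point $x\in X_1$ with skyscraper $\oO_x$, we have $\Phi^{\pP}_{X_1\to X_2}(\oO_x)\cong \dL\iota_x^{\ast}\pP$ where $\iota_x\colon \{x\}\times X_2\hookrightarrow X_1\times X_2$. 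Since $\Phi^{\pP}_{X_1\to X_2}\cong\Phi^{\qQ}_{X_1\to X_2}$ as functors, we get $\dL\iota_x^{\ast}\pP\cong\dL\iota_x^{\ast}\qQ$ for every closed point $x$, but this still does not give $\pP\cong\qQ$ without controlling the isomorphisms in families.

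The cleaner route, which I would actually carry out, is the following. Because $X_1$ is regular and $f_1$ is projective, $\Phi^{\pP}_{X_1\to X_2}$ admits both a left and a right adjoint, given by Fourier–Mukai functors with explicit kernels built from $\pP$, $\omega_{X_i}$ and the relative dualizing complexes (this is the relative version of the Bondal–Orlov / Bridgeland adjunction formulas; see~\cite[Proposition~2.3]{Ch} for the composition law, which is all one needs to write the adjoints down). Hence the composition $\Psi\cneq (\Phi^{\qQ}_{X_1\to X_2})^{-1}\circ\Phi^{\pP}_{X_1\to X_2}$ is an autoequivalence of $D^b\Coh(X_1)$ isomorphic to the identity, and it is itself a Fourier–Mukai functor $\Phi^{\rR}_{X_1\to X_1}$ with $\rR$ supported on $X_1\times_Y X_1$, obtained as a convolution of $\pP$ with the kernel of the inverse of $\Phi^{\qQ}$. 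Now I would invoke the uniqueness statement for autoequivalences isomorphic to the identity: if $\Phi^{\rR}_{X_1\to X_1}\cong\id$, then $\rR\cong\oO_{\Delta_{X_1}}$. Over $\Spec\mathbb{C}$ this is Orlov; over a general base it follows by a local-to-global argument — restrict to $\Spec\widehat{\oO}_{Y,p}$ and then to closed points of $X_1$, use that $\Phi^{\rR}(\oO_x)\cong\oO_x$ for all $x$ to see $\rR$ is set-theoretically supported on the diagonal, then use regularity of $X_1$ together with the identity $\Phi^{\rR}\cong\id$ on a spanning class to pin down $\rR\cong\oO_{\Delta_{X_1}}$ (one can cite Căldăraru–Willerton or the argument in~\cite{Ch} for the family version). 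Finally, feeding $\rR\cong\oO_{\Delta_{X_1}}$ back through the composition law $\rR\circ(-)$ and the invertibility of $\Phi^{\qQ}$ yields $\pP\cong\qQ$.

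The main obstacle I anticipate is the very first reduction: producing the adjoints and the convolution kernel $\rR$ cleanly in the relative, possibly non-projective-over-$\mathbb{C}$ setting (recall $Y$ may be the spectrum of a completion). The subtlety is that $X_i$ are only projective \emph{over} $Y$, not proper over $\mathbb{C}$, so one must check that the relevant pushforwards preserve boundedness and coherence — but this is exactly guaranteed by the hypothesis that $\pP,\qQ$ are supported on $X_1\times_Y X_2$, so that the relevant projections are projective on the supports, as already observed in the text preceding the lemma. Once that bookkeeping is in place, the rest is the formal Orlov-type argument applied on the central fibres / closed points. I would therefore organize the write-up as: (i) recall adjoint kernels and the composition law; (ii) form $\rR$ and reduce to $\Phi^{\rR}\cong\id\Rightarrow\rR\cong\oO_{\Delta}$; (iii) prove the latter by restriction to closed points plus a spanning-class argument; (iv) conclude $\pP\cong\qQ$.
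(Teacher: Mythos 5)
Your proposal is correct and follows essentially the same route as the paper: form the adjoint (inverse) kernel $\qQ^{\ast}$ via Grothendieck duality, show the convolution $\qQ^{\ast}\circ\pP$ is isomorphic to $\oO_{\Delta_{X_1}}$ by testing on skyscrapers and $\oO_{X_1}$ (the argument of~\cite[Corollary~5.23]{Huybook}), and conclude by associativity of convolution. The only point to make explicit in the write-up is that the final step also needs $\qQ\circ\qQ^{\ast}\cong\oO_{\Delta_{X_2}}$, which is a second application of the same ``identity functor has diagonal kernel'' argument.
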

\begin{proof}
Let $\qQ^{\ast}$ be the object of $D^b \Coh(X_1 \times X_2)$
given by
\begin{align*}
\qQ^{\ast} \cneq \dR \hH om_{X_1 \times X_2}
(\qQ, \oO_{X_1 \times X_2}) \otimes 
p_1^{\ast}\omega_{X_1}[\dim X_1]. 
\end{align*}
By the Grothendieck duality, the functor 
$\Phi_{X_2 \to X_1}^{\qQ^{\ast}}$ is the right adjoint of 
$\Phi_{X_1 \to X_2}^{\qQ}$, hence an inverse of it. 
We have
\begin{align*}
 \Phi_{X_2 \to X_1}^{\qQ^{\ast}} \circ
\Phi_{X_1 \to X_2}^{\pP} \cong
\Phi_{X_1 \to X_1}^{\qQ^{\ast} \circ \pP}
\end{align*}
and it is isomorphic to the identity functor. 
Then $\Phi_{X_1 \to X_1}^{\qQ^{\ast} \circ \pP}$
sends $\oO_x$ to $\oO_x$ for any $x\in X_1$, 
and $\oO_{X_1}$ to $\oO_{X_1}$. 
Applying the
 argument of~\cite[Corollary~5.23]{Huybook}, it follows that
$\qQ^{\ast} \circ \pP \cong \oO_{\Delta_{X_1}}$.  
Similarly we have $\qQ \circ \qQ^{\ast} \cong \oO_{\Delta_{X_2}}$. 
We obtain
\begin{align*}
\pP \cong \oO_{\Delta_{X_2}} \circ \pP 
\cong \qQ \circ \qQ^{\ast} \circ \pP 
\cong \qQ \circ \oO_{\Delta_{X_1}} 
\cong \qQ
\end{align*}
as desired. 
\end{proof}

\section{Correction on flop-flop autoequivalence}
In this occasion, I would correct a wrong statement in~\cite[Section~3]{Tgen}
on the description of flop-flop autoequivalence. 
Let us consider the equivalence  
\begin{align}\label{FF:eq2}
\Phi_{X^{\dag} \to X}^{\oO_{X \times_Y X^{\dag}}}
\circ \Phi_{X \to X^{\dag}}^{\oO_{X \times_Y X^{\dag}}}
\colon D^b \Coh(X) \stackrel{\sim}{\to} D^b \Coh(X)
\end{align}
associated to the flop diagram (\ref{dia:flop}). 
In~\cite[Theorem~3.1]{Tgen}, 
it was stated that if 
$C$ is either a $(-1, -1)$-curve or a $(0, -2)$-curve, 
then the functor (\ref{FF:eq2}) is isomorphic 
to the generalized (commutative) twist functor $T_{\eE}$. 
However this
turns out to be wrong: the correct statement is that 
the equivalence (\ref{FF:eq2}) is 
an inverse of $T_{\eE}$. 
Indeed the statement in~\cite[Section~3]{Tgen}
that the equivalence 
\begin{align}\label{Feq2}
\Phi_{X \to X^{\dag}}^{\oO_{X \times_Y X^{\dag}}} \colon 
D^b \Coh(X) \stackrel{\sim}{\to} D^b \Coh(X^{\dag}). 
\end{align}
takes $\oO_{C}(-1)[1]$ to $\oO_{C^{\dag}}(-1)$
was wrong: 
it should be corrected that (\ref{Feq2}) takes 
$\oO_C(-1)$ to $\oO_{C^{\dag}}(-1)[1]$. 
Then replacing $T_{\eE}$ with $T_{\eE}^{-1}$
in the proof of~\cite[Theorem~3.1]{Tgen}, 
we obtain the statement that (\ref{FF:eq})
is isomorphic to $T_{\eE}^{-1}$. 

We explain why the above statement in~\cite[Section~3]{Tgen}
was wrong. 
In~\cite[Section~3]{Tgen}, 
I referred~\cite[Ver~1, Lemma~5.1]{Tst}, which 
in turn referred~\cite[(4.8)]{Br1}
that the equivalence (\ref{Feq2}) 
induces the equivalence
\begin{align}\label{ioPer}
\iPPer(X/Y) \stackrel{\sim}{\to} \oPPer(X^{\dag}/Y).
\end{align}
(Here we have used the fact that 
the equivalence (\ref{Feq2}) coincides with 
the equivalence $\Phi$ given in~\cite[Section~4]{Br1} by~\cite{Ch}). 
However (\ref{ioPer}) was not correct: 
it should 
be corrected that (\ref{Feq2}) 
induces the equivalence\footnote{In the notation of~\cite[(4.8)]{Br1}, 
the equivalence $\pPPer(W/X) \cong {\mathop{^{{p+1}}\rm{Per}}\nolimits}(Y/X)$
should be corrected as 
$\pPPer(W/X) \cong {\mathop{^{{p-1}}\rm{Per}}\nolimits}(Y/X)$}
\begin{align}\label{oiPer}
\oPPer(X/Y) \stackrel{\sim}{\to} 
\iPPer(X^{\dag}/Y).
\end{align}
Indeed let $\cC_X \subset \Coh(X)$
be the category of sheaves $F$ with $\dR f_{\ast}F=0$. 
Then~\cite[(4.5)]{Br1}
shows that 
(\ref{Feq2}) takes $\cC_X$ to $\cC_{X^{\dag}}[1]$. 
On the other hand, as $\pPPer(X/Y)$ is the gluing of 
$\Coh(Y)$ and $\cC_X[-p]$ (not $\cC_X[p]$) by the definition, 
the equivalence (\ref{Feq2}) should reduce the 
perversity one. 
After correcting (\ref{ioPer}) as (\ref{oiPer}), 
the argument of~\cite[Ver~1, Lemma~5.1]{Tst}
shows that (\ref{Feq2}) takes $\oO_{C}(-1)$ to $\oO_{C^{\dag}}(-1)[1]$.

\providecommand{\bysame}{\leavevmode\hbox to3em{\hrulefill}\thinspace}
\providecommand{\MR}{\relax\ifhmode\unskip\space\fi MR }
\providecommand{\MRhref}[2]{%
  \href{http://www.ams.org/mathscinet-getitem?mr=#1}{#2}
}
\providecommand{\href}[2]{#2}

Kavli Institute for the Physics and 
Mathematics of the Universe, University of Tokyo,
5-1-5 Kashiwanoha, Kashiwa, 277-8583, Japan.

\textit{E-mail address}: yukinobu.toda@ipmu.jp

\end{document}